\renewcommand{\everyentry@}{\vphantom{\hat \M_1}}
\newtheorem{theo}{Théorème}
\newtheorem{lemme}[theo]{Lemme}
\newtheorem{prop}[theo]{Proposition}
\newtheorem{cor}[theo]{Corollaire}
\theoremstyle{definition}
\def\leq{\leqslant}
\def\geq{\geqslant}
\def\Z{\mathbb{Z}}
\def\F{\mathbb{F}}
\def\pa#1{\left(#1\right)}
\def\epsilon{\varepsilon}
\def\calB{\mathcal{B}}
\def\calR{\mathcal{R}}
\def\calN{\mathcal{N}}
\def\id{\text{\rm id}}
\def\Mp{\text{\rm Mod}^\phi_{/K}}
\title{Sur la classification de quelques $\phi$-modules simples}
\author{Xavier Caruso}
\date{Juillet 2008}
\begin{document}

\maketitle

Dans cet appendice, on détermine les $\phi$-modules étales simples sur
$\bar F_p((u))$ dans une situation légèrement plus générale que celle
étudiée dans l'article (\cite{hellmann}). On fixe $p$ un nombre premier
et on pose $k = \bar \F_p$ et pour tout $q$, puissance de $p$, on
définit $\F_q$ comme l'unique sous-corps de $k$ de cardinal $q$. Soit
$\sigma$ un automorphisme de $k$.et $b > 1$ un entier. On note $\ell$ le
sous-corps de $k$ fixe par $\sigma$ et, plus généralement, pour tout
entier $d$, on note $\ell_d$ le sous-corps fixe par $\sigma^d$. On
considère le corps $K = k((u))$ que l'on munit de l'endomorphisme :
$$\phi\pa{\sum_{n=-\infty}^{\infty} a_n u^n} =
\sum_{n=-\infty} ^{\infty} \sigma(a_n) u^{bn}.$$
(Si $b = p$ et $\sigma = \id$, on retrouve donc la situation de
l'article.)

On considère la catégorie $\Mp$ dont les objets sur les $K$-espaces
vectoriels $D$ de dimension finie muni d'un endomorphisme
$\phi$-semi-linéaire $\phi_D : D \to D$ dont l'image contient une base
de $D$. Par la suite, lorsque cela ne prêtera pas à confusion, nous
noterons simplement $\phi$ à la place de $\phi_D$. Les morphismes de
$\Mp$ sont bien entendu les applications $K$-linéaires qui commutent à
$\phi$. On vérifie aisément que la catégorie $\Mp$ est abélienne et que
chaque objet est de longueur finie. Le but de cette appendice est de
déterminer les objets simples de $\Mp$.

\subsection*{Les objets $D(d,n,a)$ et $D(r,a)$}

Soient $d$ un entier strictement positif, $n$ un entier naturel et $a$
un élément non nul de $k$. À ces données, on associe un objet de $\Mp$
noté $D(d,n,a)$ défini comme suit :
\begin{itemize}
\item[$\bullet$] $D(d,n) = K e_0 \oplus K e_1 \oplus \cdots 
\oplus K e_{d-1}$ ;
\item[$\bullet$] $\phi(e_i) = e_{i+1}$ pour $i \in \{0, \ldots, d-2\}$.
\item[$\bullet$] $\phi(e_{d-1}) = a u^n e_0$.
\end{itemize}
On définit également $D(d,n) = D(d,n,1)$.

\bigskip

On commence par déterminer des familles d'isomorphismes entre les
différents $D(d,n,a)$.

\begin{lemme}
\label{lem:a1}
Si $\sigma$ n'est pas l'identité, alors $D(d,n,a)$ est isomorphe à
$D(d,n)$ pour tous $d$, $n$ et $a$ comme précédemment.
\end{lemme}

\begin{proof}
Considérons un élément $\lambda \in k$ tel que $a = \frac {\sigma^d 
(\lambda)} {\lambda}$ (l'existence résulte d'un théorème classique de
théorie de Galois). L'isomorphisme $D(d,n,a) \to D(d,n)$ est alors
donné par $e_i \mapsto \sigma^i (\lambda) e_i$.
\end{proof}

\begin{lemme}
\label{lem:isom}
Soient $(d,n,a)$ et $(d,n',a')$ deux triplets comme précédemment avec le
même $d$. On suppose qu'il existe un entier naturel $s$ tel que $n
\equiv b^s n' \pmod {b^d-1}$ et $a = \sigma^s(a')$. Alors $D(d,n,a) 
\simeq D(d,n',a')$.
\end{lemme}

\begin{proof}
Si $m = \frac{n - b^s n'}{b^d-1} \in \Z$, et si $e_0, \ldots, e_{d-1}$
(resp. $e'_0, \ldots, e'_{d-1}$) est la base fournie par la définition,
alors un isomorphisme est $e_i \mapsto u^{b^i m} e'_{i+s}$ où les $e'_j$
pour $j \geq d$ sont définis par la relation de récurrence $e'_{j+1} =
\phi(e'_j)$.
\end{proof}

\begin{prop}
\label{prop:decomp}
Soient $(d,n,a)$ un triplet comme précédemment. On suppose qu'il existe
$d'$ et $n'$ comme précédemment tels que $t = \frac d{d'}$ soit un entier
et que $\frac n{b^d-1} = \frac{n'}{b^{d'}-1}$.
\begin{itemize}
\item[(i)] Si $\sigma$ n'est pas l'identité, on a :
$$D(d,n,a) \simeq D(d',n')^{\oplus \frac d {d'}}.$$
\item[(ii)] Si $\sigma$ est l'identité et si $t$ est premier avec $p$, on a :
$$D(d,n,a) \simeq D(d',n',a'_1) \oplus D(d',n',a'_2) \oplus \cdots
\oplus \simeq D(d',n',a'_t)$$
où les $a'_i$ sont les racines $t$-ièmes de $a$.
\item[(iii)] Si $\sigma$ est l'identité, $a=1$ et $t = p$, il existe une
suite croissantes de sous-modules de $D(d,n)$ stables par $\phi$
$$0 = D_0 \subset D_1 \subset D_2 \subset \cdots \subset D_p = D(d,n)$$
pour laquelle tous les quotients $D_m/D_{m-1}$ ($1 \leq m \leq p$) sont
isomorphes à $D(d',n')$.
\end{itemize}
\end{prop}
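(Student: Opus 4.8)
The common engine for all three parts is the search for $\phi^{d'}$-eigenvectors inside the ``layer'' $W_0 = \bigoplus_{j=0}^{t-1} K e_{jd'}$, where $t = d/d'$. First I would record the two structural facts that make this work: writing $W_r = \bigoplus_{j} K e_{jd'+r}$, one has $D(d,n,a) = \bigoplus_{r=0}^{d'-1} W_r$ and $\phi^r$ restricts to a ($\sigma^r$-semilinear) bijection $W_0 \to W_r$, since $\phi$ is injective and sends $e_{jd'}$ to $e_{jd'+r}$. Consequently, any $K$-basis of $W_0$ made of $\phi^{d'}$-eigenvectors $f_0$ produces, via $f_0, \phi(f_0), \dots, \phi^{d'-1}(f_0)$, a decomposition of $D(d,n,a)$: each $f_0$ with $\phi^{d'}(f_0) = a'u^{n'}f_0$ generates a $\phi$-stable submodule isomorphic to $D(d',n',a')$ (the defining relations hold by construction), and if the $f_0$'s form a basis of $W_0$ the corresponding submodules are in direct sum and fill $D(d,n,a)$.

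The second step is to solve the eigen-equation. Setting $f_0 = \sum_{j} \lambda_j e_{jd'}$, a direct computation reduces $\phi^{d'}(f_0) = a'u^{n'}f_0$ to $\lambda_0 = c_0 \in k$ constant with $\sigma^d(c_0) = a^{-1}N(a')\,c_0$, where $N(a') = \prod_{i=0}^{t-1}\sigma^{id'}(a')$, together with $\lambda_j = \big(\prod_{i=0}^{j-1}\sigma^{id'}(a')\big)^{-1}\sigma^{jd'}(c_0)\,u^{m_j}$; the exponents $m_j = -n'\frac{b^{jd'}-1}{b^{d'}-1}$ are forced, and the closure relation at $j=0$ is consistent \emph{exactly} because $n(b^{d'}-1) = n'(b^d-1)$. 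Writing $\tau = \sigma^{d'}$ and $w = \tau(c_0)/(a'c_0)$, one checks that the admissible parameters are precisely the norm-one elements $w \in \ker N$, and that the coordinate matrix of a family $f_0^{(1)},\dots,f_0^{(t)}$ equals, up to the units $u^{m_j}$ and the scalars $c_0^{(s)}$, the ``Moore--Vandermonde'' matrix $\big[\prod_{i=0}^{j-1}\tau^i(w_s)\big]_{0\leq j\leq t-1,\,1\leq s\leq t}$. Everything thus comes down to making this determinant nonzero by a suitable choice of $w_s \in \ker N$.

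The three cases are then distinguished purely by the size of $\ker N$. In case (i), $\sigma \neq \id$ forces $\tau = \sigma^{d'} \neq \id$ (the automorphism group of $\bar\F_p$ is torsion-free); after reducing to $a = 1$ via Lemma \ref{lem:a1} and noting that each $D(d',n',a')$ is then isomorphic to $D(d',n')$, the group $\ker N$ is infinite, the functions $w \mapsto \prod_{i<j}\tau^i(w)$ define $t$ distinct characters on it, and one can pick $w_1,\dots,w_t$ making the determinant nonzero (linear independence of distinct characters, in the spirit of Artin--Dedekind). In case (ii), $\tau = \id$ so $N(a') = (a')^t$ and the equation becomes $(a')^t = a$; since $p \nmid t$ there are $t$ distinct roots $a'_1,\dots,a'_t$, the parameters $w_s = (a'_s)^{-1}$ are distinct, the matrix is an honest Vandermonde, and the splitting follows. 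In case (iii), $\tau = \id$, $a = 1$ and $t = p$ force $(a')^p = 1$, so $a' = 1$ is the only possibility and $\ker N = \mu_p$ is trivial: there is a single eigen-line and no splitting. Instead I would build the filtration by peeling off this unique $D(d',n')$-submodule $D_1$, passing to $D(d,n)/D_1$, and iterating; the rank count $d = pd'$ gives exactly $p$ steps with all graded pieces $\simeq D(d',n')$.

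The main obstacle is case (i). Because $\tau \neq \id$ makes $x \mapsto \tau(x)/x$ surjective on $k^*$, all eigenvalues $a'u^{n'}$ are $\phi^{d'}$-conjugate, so the naive ``distinct eigenvalues are independent'' argument for semilinear operators yields nothing; independence must instead be extracted from the explicit nonvanishing of the twisted Vandermonde determinant over $\ker N$, which is where the bulk of the work lies. The secondary technical point is the existence of an eigen-line in each quotient occurring in case (iii), which I would justify by observing that every such quotient remains isoclinic of slope $n'$ over the algebraically closed field $k$, so a rank-one $\phi$-stable subobject always exists.
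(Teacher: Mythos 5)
Parts (i) and (ii) of your proposal are correct and, despite the different packaging, essentially coincide with the paper's proof: solving the eigen-equation $\phi^{d'}(f_0) = a'u^{n'}f_0$ forces exactly the monomial coefficients $u^{-n'(b^{jd'}-1)/(b^{d'}-1)}$, i.e.\ your $f_0$'s are the paper's $f_i(\alpha) = \sum_s \sigma^{sd'+i}(\alpha)\, u^{-rb^i(b^{sd'}-1)}\, e_{sd'+i}$, and your nonvanishing of the twisted Vandermonde over $\ker N$ (via Hilbert 90) is equivalent to the paper's choice of a basis $(\alpha_1, \ldots, \alpha_t)$ of $\ell_d$ over $\ell_{d'}$ combined with Artin--Dedekind independence of the embeddings $\sigma^{sd'}$. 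Two slips, both harmless. First, $\phi^r : W_0 \to W_r$ is \emph{not} a bijection ($\phi$ is not surjective on $K$); what you need, and what is true, is that it carries a $K$-basis of $W_0$ to a free family in $W_r$, since the coordinate determinant is sent to $\phi^r(\det) \neq 0$. Second, $\ker N$ need not be infinite: for $\sigma$ the Frobenius, $\ell_{d'}$ is a finite field and the norm-one group has order $(q^t-1)/(q-1)$. But infinitude is never used --- your argument only requires the $t$ characters $w \mapsto \prod_{i=0}^{j-1}\tau^i(w)$ to be pairwise distinct on $\ker N$, which holds because by Hilbert 90 their ratios pull back to $c \mapsto \tau^m(c)/c$ on $\ell_d^*$ and $\tau$ has exact order $t$ on $\ell_d$.

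The genuine gap is in (iii). Your peeling argument needs, in each of the $p-1$ successive quotients, a subobject isomorphic to $D(d',n')$, and your justification --- the quotient ``remains isoclinic of slope $n'$ over the algebraically closed field $k$, so a rank-one $\phi$-stable subobject always exists'' --- is not available at this point. The paper develops no slope theory; the existence of a nonzero $z$ with $\phi^{\delta}(z) = u^n z$ in a general object is precisely Theorem \ref{theo:vp}, which is proved \emph{later}, only for simple objects, and whose corollary uses Proposition \ref{prop:decomp} itself --- so the appeal is circular, and in any case your quotients are neither a priori simple nor of the form $D(d'',n'')$, so nothing established so far applies to them. The paper sidesteps this entirely by writing the filtration explicitly: it sets $f_{i,j} = \sum_{s=0}^{p-1} s^j\, u^{-rb^i(b^{sd'}-1)}\, e_{sd'+i}$ --- the confluent degeneration of your Vandermonde family at the coincident nodes $s \in \F_p$ --- and checks the binomial identity
$$\phi(f_{d'-1,j}) = u^{n'} \sum_{\mu=0}^{j} (-1)^{j-\mu}\binom{j}{\mu} f_{0,\mu},$$
from which the stability of the $D_m$ and the isomorphisms $D_m/D_{m-1} \simeq D(d',n')$ follow at once. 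Your framework in fact contains the seed of this fix: your unique eigenvector $f_{0,0}$ is the bottom of a Jordan-type block $f_{0,0}, \ldots, f_{0,p-1}$ on which $\phi^{d'} - u^{n'}$ acts ``nilpotently'' modulo the filtration; to complete (iii) you should exhibit these generalized eigenvectors explicitly rather than invoke an unproved Dieudonn\'e--Manin-type statement.
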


\begin{proof}
Pour tout la preuve, posons $r = \frac n{b^d-1} = \frac{n'}{b^{d'}-1}$.

On traite d'abord (i). D'après le lemme \ref{lem:a1}, on peut supposer $a
= 1$. Étant donné que le groupe de Galois absolu de $\F_p$ est un groupe
procyclique sans torsion (il est isomorphe à $\hat \Z$), $\ell_d$ est
une extension cyclique de degré $t$ de $\ell_{d'}$. On définit pour tout
$\alpha \in k$ et $i \in \{0, \ldots, d'-1\}$, les éléments suivants de
$D(d,n,a)$ :
$$f_i(\alpha) = \sum_{s=0}^{t-1} \sigma^{sd'+i}(\alpha) \: 
u^{-rb^i(b^{sd'}-1)} \: e_{sd'+i}.$$
(On remarquera que l'exposant qui apparaît sur $u$ est un entier étant
donné $r (b^{d'}-1) = n'$ en est un.) On vérifie à la main que
$\phi(f_i(\alpha)) = f_{i+1} (\alpha)$ pour $i \in \{0, \ldots, d'-2\}$, 
et que $\phi(f_{d'-1}(\alpha)) = u^{n'} f_0(\alpha)$ (la dernière égalité 
utilise $\sigma^d(\alpha) = \alpha$, ce qui est vrai puisque $\alpha$ est 
pris dans $\ell_d$). Comme par ailleurs, il est clair que à $\alpha$ fixé, 
la famille des $f_i(\alpha)$ est libre, on en déduit que les sous-objets
$F(\alpha) = K f_0(\alpha) \oplus \cdots \oplus K f_{d-1}(\alpha)$ sont 
tous isomorphes à $D(d',n')$. Il suffit donc pour conclure de montrer que
$t$ d'entre eux sont en somme directe. Ceci nous amène à chercher des
éléments $\alpha_1, \ldots, \alpha_t$ tels que chacune des matrices :
$$M_i = \pa{ \begin{array}{cccc}
\sigma^i(\alpha_1) & \sigma^i(\alpha_2) & \cdots & \sigma^i(\alpha_t) \\
\sigma^{d'+i}(\alpha_1) & \sigma^{d'+i}(\alpha_2) & \cdots & 
\sigma^{d'+i}(\alpha_t) \\
\sigma^{2d'+i}(\alpha_1) & \sigma^{2d'+i}(\alpha_2) & \cdots & 
\sigma^{2d'+i}(\alpha_t) \\
\vdots & \vdots & & \vdots \\
\sigma^{(t-1)d'+i}(\alpha_1) & \sigma^{(t-1)d'+i}(\alpha_2) & \cdots & 
\sigma^{(t-1)d'+i}(\alpha_t) \\
\end{array} }$$
(pour $i$ variant entre $0$ et $d'-1$) soit inversible. En réalité, il
suffit pour cela de choisir les $\alpha_i$ de façon à ce que $(a_1, \ldots,
a_t)$ soit une base de $\ell_d$ sur $\ell_{d'}$. En effet, on
remarque d'abord que $M_i = \sigma^i(M_0)$ et donc qu'il suffit de
démontrer l'inversibilité de $M_0$. On invoque alors le théorème d'Artin
d'indépendance linéaire des caractères qui montre que les vecteurs ligne
forment une famille libre.

Passons maintenant au (ii) : on suppose donc $\sigma = \id$. On pose
cette fois-ci :
$$f_i(\alpha) = \sum_{s=0}^{t-1} \alpha^{t-1-s} \: 
u^{-rb^i(b^{sd'}-1)} \: e_{sd'+i}$$
pour tout $i \in \{0, \ldots, d'-1\}$ et tout $\alpha \in k$.
On a encore $\phi(f_i(\alpha)) = f_{i+1} (\alpha)$ pour $i \in \{0,
\ldots, d'-2\}$, et si $\alpha$ est une racine $t$-ième de $a$, on
vérifie directement que $\phi(f_{d-1}(\alpha)) = \alpha u^{n'} f_0
(\alpha)$. Ainsi, comme il est clair par ailleurs que la famille des
$f_i(\alpha)$ ($0 \leq i < d'$) est libre, on a $K f_0(\alpha) + \cdots
+ K f_{d-1}(\alpha) \simeq D(d',n',\alpha)$ (toujours sous l'hypothèse
$\alpha^t = a$).
Si maintenant $t$ est premier à $p$, $a$ admet $t$ racines $t$-ièmes
distinctes, et un calcul de déterminants de Vandermonde montre
facilement que la famille des $(f_i(\alpha))_{0 \leq i < d', \, \alpha^t
= a}$ est une base de $D(d,n,a)$. La conclusion s'ensuit.

Terminons finalement par la démonstration de l'assertion (iii). Pour tous
entiers $i \in \{0, \ldots, d'-1\}$ et $j \in \{0, \ldots, p-1\}$, on 
pose :
$$f_{i,j} = \sum_{s=0}^{p-1} s^j \: u^{-rb^i(b^{sd'}-1)} \: e_{sd'+i}$$
et on définit $D_m$ comme le sous-$K$-espace vectoriel de $D(d,n)$
engendré par les $f_{i,j}$ avec $0 \leq i < d'$ et $0 \leq j < m$. 
À nouveau l'utilisation des déterminants de Vandermonde assure la 
liberté de la famille des $f_{i,j}$, ce qui montre que la dimension
de $D_m$ sur $K$ est $md'$. Par ailleurs, on a les relations 
$\phi(f_{i,m}) = f_{i+1,m}$ pour $i \in \{0, \ldots, d'-2\}$ et :
$$\phi(f_{d-1,m}) = u^{n'} \sum_{\mu=0}^m (-1)^{m-\mu} \binom m \mu 
f_{0,\mu} \equiv u^{n'} f_{0,m} \pmod {D_{m-1}}.$$
Elles montrent à la fois que $D_m$ est stable par $\phi$ pour tout $m$
et que les quotients $D_m/D_{m-1}$ sont tous isomorphes à $D(d',n')$.
\end{proof}

La proposition précédente nous conduist à considérer $\calR_b$
l'ensemble quotient de $\Z_{(b)}$ (le localisé de $\Z$ en la partie
multiplicative des entiers premiers avec $b$) par la relation
d'équivalence :
$$x \sim y \quad \Longleftrightarrow \quad (\exists s \in \Z) \,\,\,
x \equiv b^s y \pmod \Z.$$
\emph{Via} l'écriture en base $b$, les éléments de $\calR_b$
s'interprètent aussi comme l'ensemble des suites périodiques (depuis le
début) d'éléments de $\{0, 1, \ldots, b-1\}$ modulo décalage des
indices, où l'on a en outre identifié les suites constantes égale à $0$ 
et à $b-1$. 

Soit $r \in \calR_d$. Par définition, il est représenté par une fraction
(que l'on peut supposer --- et que l'on supposera par la suite ---
irréductible) $\frac s t$ où $t$ est un nombre premier avec $b$. On
vérifie directement que l'ordre de $b$ modulo $t$ ne dépend pas du
représentant (irréductible) choisi : on l'appelle la \emph{longueur} de
$r$ et on le note $\ell(r)$. On pourra remarquer qu'à travers le point
de vue \og suites périodiques \fg, $\ell(r)$ s'interprète simplement
comme la plus petite période.

Notons $\calN(r)$ l'ensemble des entiers relatifs $n$ pour lesquels
$\frac n {b^{\ell(r)} -1}$ est un représentant de $r$. La définition de
$\ell(r)$ implique immédiatement la non-vacuité de $\calN(r)$. Le lemme
\ref{lem:isom} montre que les objets $D(\ell(r),n)$ pour $n$ variant
dans $\calN(r)$ sont isomorphes entre eux. Notons $D(r)$ l'un de ces
objets. Si en outre $\sigma = \id$, le même lemme \ref{lem:isom} permet
de définir $D(r,a)$ pour tout $a \in k^\star$ comme l'un des 
$\phi$-modules $D(\ell(r),n,a)$, $n \in \calN(r)$.

\begin{theo}
\label{theo:Dr}
Les $D(r)$ (resp. $D(r,a)$ si $\sigma = \id$) sont des objets simples de
$\Mp$. De plus, ils sont deux à deux non isomorphes.
\end{theo}

\begin{proof}
Pour simplifier la preuve, on suppose dans la suite $a=1$, laissant au 
lecteur l'exercice (facile) d'adapter les arguments au cas général.

Notons $\ell = \ell(r)$ et considérons $n$ tel que $\frac n {p^\ell-1}$
représente $r$. Soit $D$ un sous-objet non nul de $D(r)$, et soit $x =
\lambda_1 e_1 + \cdots + \lambda_\ell e_\ell$ un élément non nul de $D$
pour lequel le nombre de $\lambda_i$ non nuls est minimal. Quitte à
remplacer $x$ par $\phi^m(x)$ et pour un certain entier $m$, on peut
supposer $\lambda_1 \neq 0$. Quitte à renormaliser $x$, on peut en outre
supposer $\lambda_1 = 1$. On a alors :
$$\phi^d(x) - u^n x = \sum_{i=2}^\ell (\phi^d(\lambda_i) u^{b^i n} - 
\lambda_i u^n) e_i \in D.$$
Supposons par l'absurde qu'il existe un indice $i > 1$ tel que
$\lambda_i \neq 0$. Montrons dans un premier temps que
$\phi^d(\lambda_i) u^{b^i n} \neq \lambda_i
u^n$. Encore par l'absurde : si ce n'était pas le cas, on déduirait
$\frac{\phi^d(\lambda_i)} {\lambda_i} = u^{-n(b^i-1)}$ et puis $v
(b^d-1) = -n (b^i -1)$ où $v$ est la valuation $u$-adique de
$\lambda_i$. Ainsi, on aurait $r = \frac{-v}{b^i - 1}$, et on
obtiendrait une contradiction avec la définition de $\ell$. Au final,
$\phi^d(\lambda_i) u^{b^i n} \neq \lambda_i u^n$, et
l'élément $\phi^d(x)$ est un élément non nul de $D$ qui, sur la base des
$(e_i)$, a strictement moins de coefficients non nuls que n'en avait
$x$. Ceci contredit la minimalité supposée et montre que $x = e_1$.
On en déduit $e_1$ est élément de $D$ et, puisque ce dernier est par
hypothèse stable par $\phi$, il contient nécessairement tous les $e_i$.
En conclusion, $D = D(r)$ et le théorème est démontré.

Pour prouver que $D(r)$ n'est pas isomorphe à $D(r')$, il suffit de
remarquer que $\ell(r)$ et $\calN(r)$ se retrouvent tous deux à partir 
de $D(r)$ : le premier en est la dimension, alors que le second est
l'ensemble des entiers $n$ pour lesquels il existe un $x \in D(r)$ non
nul vérifiant $\phi^{\ell(r)} (x) = u^n x$. Finalement, il est clair 
qu'à partir de ces deux données, $r$ est entièrement déterminé dans le
quotient $\calR_b$.
\end{proof}

\subsection*{Classification des objets simples}

Nous souhaitons désormais montrer la réciproque du théorème
\ref{theo:Dr}, c'est-à-dire que les objets simples de $\Mp$ sont tous
isomorphes à un certain $D(r)$ (ou $D(r,a)$ si $\sigma = \id$). Pour
cela, nous considérons $D$ un objet simple de $\Mp$, $(e_1, \ldots,
e_d)$ une base de $D$ et $G$ la matrice de $\phi$ dans cette base,
\emph{i.e.} l'unique matrice vérifiant l'égalité :
$$(\phi(e_1), \ldots, \phi(e_d)) = (e_1, \ldots, e_d) G.$$
On rappelle, à ce propos, la formule de changement de base qui
interviendra plusieurs fois dans la suite : si $\calB' = (e'_1, \ldots,
e'_d)$ est une autre base de $D$ et si $P$ est la matrice de passage de
$(e_1, \ldots, e_d)$ à $\calB'$, alors la matrice de $\phi$ dans la base
$\calB$ est donnée par la formule $P^{-1} G \phi(P)$. Il résulte de
cette formule que, quitte à multiplier les $e_i$ par une certaine
puissance de $u$, on peut supposer que $G$ est à coefficients dans
$k[[u]]$. En réalité on aura besoin d'un résultat un peu plus précis,
conséquence du lemme suivant. Introduisons avant tout une dernière
notation : soit $\gamma$ la valuation $u$-adique de $\det G$.

\begin{lemme}
Soit $N$ un entier strictement supérieur à $\frac{p\gamma}{b-1}$ et $H$
une matrice à coefficients dans $k[[u]]$ congrue à $G$ modulo $u^N$.
Alors, il existe une matrice $P$ à coefficients dans $k[[u]]$ et
inversible dans cet anneau telle que $P G \phi(P)^{-1} = H$.
\end{lemme}

\begin{proof}
On définit une suite de matrices $(P_i)$ (\emph{a priori} à coefficients
dans $K$) par $P_0 = I$ et la formule de récurrence $P_{i+1} = H \phi(P)
G^{-1}$. On a directement $P_1 = H G^{-1}$ d'où on déduit, en
utilisant l'hypothèse de l'énoncé, que $P_1 \equiv I \pmod {u^{N-v}}$,
\emph{i.e.} $P_1 - P_0$ est divisible par $u^{N-v}$. Par ailleurs, pour
tout $i \geq 1$, on a $P_{i+1} - P_i = H \phi(P_i - P_{i-1}) G^{-1}$,
d'où il suit que si $P_i - P_{i-1}$ est divisible par $u^v$, alors
$P_{i+1} - P_i$ est divisible $u^{pv-\gamma}$. Une récurrence immédiate
montre alors que $P_{i+1} - P_i$ est divisible par $u^{v_i}$ où la suite
$(v_i)$ est définie par $v_0 = N - \gamma$ et $v_{i+1} = b v_i -
\gamma$.  De $v_0 > \frac \gamma {b-1}$, on déduit que $(v_i)$ est une
suite croissante qui tend vers l'infini. Ainsi $P_{i+1} - P_i$ converge
vers $0$ pour la topologie $u$-adique, et la suite des $(P_i)$ converge
vers une matrice $P$. Celle-ci vérifie $P G \phi(P)^{-1} = H$ et est
congrue à l'identité modulo $u$ du fait que chacun des $v_i$ est
strictement positif. Elle est donc inversible dans $k[[u]]$, comme
demandé.
\end{proof}

Le lemme nous assure que, quitte à modifier la base $(e_1, \ldots,
e_d)$, on peut remplacer $G$ par une matrice qui lui est congrue modulo
$u^N$. En particulier, on peut supposer que $G$ a tous ses coefficients
dans $\F_q[[u]]$ pour un certain $q$. C'est ce que nous ferons par la
suite.

\medskip

Soit $M$ le sous-$\F_q[[u]]$-module de $D$ engendré par les $e_i$ ; il
est libre de rang $d$. On définit deux suites récurrentes $(x_i)$ et
$(n_i)$ comme suit. On pose en premier lieu $x_0 = e_1$. Maintenant, si
$x_i$ est construit, on définit $n_i$ comme le plus petit entier tel que
$\phi(x_i) \in u^{n_i} M$ et $x_{i+1} = u^{-n_i} x_i$. On remarque tout
de suite que tous les $x_i$ sont des éléments de $M$ qui ne sont pas
dans $uM$.

\begin{lemme}
\label{lem:majni}
Pour tout $i$, on a $n_i \leq \gamma$.
\end{lemme}

\begin{proof}
Il suffit de montrer que si $x \in M$, $x \not\in uM$ alors $\phi(x)
\not\in u^{\gamma+1} M$. Or, le prémisse entraîne l'existence d'une
$k[[u]]$-base $(x_1, \ldots, x_d)$ de $M$ avec $x_1 = x$. Si $P$ est la
matrice de passage de $(e_1, \ldots, e_d)$ à $(x_1, \ldots, x_d)$ la
matrice du $\phi$ dans la base $(x_1, \ldots, x_d)$ est donnée par la
formule $H = P^{-1} G \phi(P)$. Comme $P$ est inversible dans $k[[u]]$,
son déterminant a une valuation $u$-adique nulle, d'où on déduit que le
déterminant de $H$ a pour valuation $u$-adique $\gamma$. Ainsi, sa première
colonne ne peut pas être multiple de $u^{\gamma+1}$ ce qui correspond 
exactement à ce que l'on voulait.
\end{proof}

Soit $c$ un entier strictement supérieur à $\frac \gamma {b-1}$. Notons
$\bar x_i$ la réduction modulo $u^c$ de $x_i$ : c'est un élément de
l'ensemble \emph{fini} $M/u^c M$. D'après le principe des tiroirs, il
existe deux indices $i < j$ tels que $\bar x_i = \bar x_j$. Posons
$\delta = j - i$ et $x = x_i$. En déroulant les définitions, on 
obtient :
$$\phi^\delta (x) \equiv u^n x \pmod {u^{n+c} M}$$
avec :
$$n = b^{\delta-1} n_i + b^{\delta-2} n_{i+1} + \cdots + b n_{j-2}
+ n_{j-1}.$$
Nous souhaitons à présent relever la dernière congruence en une vraie
égalité dans $M$. Pour cela, on commence par écrire $\phi^\delta(x) = 
u^n (x + u^c y)$ et on définit une nouvelle suite récurrente 
$(z_i)$ par $z_0 = x$ et $z_{i+1} = u^{-n} \phi^\delta(z_i)$. On a
$z_1 - z_0 = u^c y$ et $z_{i+1} - z_i = u^{-n} \phi^\delta(z_i - 
z_{i-1})$. Il s'ensuit que $z_{i+1} - z_i$ est un multiple de 
$u^{v_i}$ où $(v_i)$ est la suite récurrente définie par $v_0 = c$
et $v_{i+1} = b^\delta v_i - n$. Maintenant, le lemme \ref{lem:majni}
donne :
$$n \leq \gamma (b^{\delta-1} + b^{\delta-2} + \cdots + 1) = \gamma
\: \frac{b^\delta - 1} {b-1} < c (b^\delta - 1)$$
à partir de quoi on déduit $\lim_{i \to \infty} v_i = +\infty$.
Ainsi la suite $(z_i)$ converge vers un élément $z \in M$ (car tous les
$v_i$ sont positifs) vérifiant $\phi^\delta(z) = z$. Par ailleurs, on a
$z \equiv x \pmod u$, ce qui assure qu'il est non nul. On a donc montré
le résultat intermédiaire important suivant :

\begin{theo}
\label{theo:vp}
Soit $D$ un objet simple de $\Mp$. Alors, il existe des entiers $\delta 
> 0$, $n \geq 0$ et un élément non nul $z \in D$ tels que $\phi^\delta(z) 
= u^n z$.
\end{theo}

\begin{cor}
Soit $D$ un objet simple de $\Mp$.
\begin{itemize}
\item Si $\sigma$ n'est pas l'identité, il existe $r \in \calR_b$
tel que $D \simeq D(r)$.
\item Si $\sigma$ est l'identité, il existe $r \in \calR_b$ et 
$a \in k^\star$ tels que $D \simeq D(r,a)$.
\end{itemize}
\end{cor}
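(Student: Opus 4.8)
The plan is to realize $D$ as a quotient of one of the modules $D(\delta,n)$ and then read off its isomorphism class from the decomposition results of Proposition~\ref{prop:decomp}, using that $D(r)$ (resp. $D(r,a)$) is simple by Theorem~\ref{theo:Dr}.

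First I would apply Theorem~\ref{theo:vp} to produce a nonzero $z \in D$ together with integers $\delta > 0$ and $n \geq 0$ such that $\phi^\delta(z) = u^n z$. Sending $e_i \mapsto \phi^i(z)$ defines a $K$-linear map $f : D(\delta,n) \to D$, and the defining relations $\phi(e_i) = e_{i+1}$ and $\phi(e_{\delta-1}) = u^n e_0$ match exactly the identities $\phi^{i+1}(z) = \phi(\phi^i(z))$ and $\phi^\delta(z) = u^n z$; hence $f$ commutes with $\phi$ and is a morphism of $\Mp$. Its image is the $K$-span of the $\phi^i(z)$, a nonzero $\phi$-stable subspace of $D$, so by simplicity $f$ is surjective and $D$ is a quotient of $D(\delta,n)$. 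Writing $r$ for the class of $\frac{n}{b^\delta-1}$ in $\calR_b$ and $\ell = \ell(r)$, the fact that $\frac{n}{b^\delta-1}$ represents $r$ forces $\ell \mid \delta$.

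When $\sigma \neq \id$, Proposition~\ref{prop:decomp}(i) gives $D(\delta,n) \simeq D(r)^{\oplus (\delta/\ell)}$, a semisimple module all of whose simple constituents are isomorphic to $D(r)$. Any nonzero simple quotient of a semisimple module is isomorphic to one of its summands, so $D \simeq D(r)$. When $\sigma = \id$ the argument is the same in spirit but one must track the scalars $a$: I would show that every composition factor of $D(\delta,n) = D(\delta,n,1)$ is of the form $D(r,a)$, whence the simple quotient $D$ is isomorphic to the top factor and thus to some $D(r,a)$ (and automatically $\dim D = \ell$). To establish this I would peel the factors of $t = \delta/\ell$ off one prime at a time, but \emph{in a carefully chosen order}: part (iii) is only available for $a=1$, while part (ii) accepts an arbitrary scalar yet only splits off a factor coprime to $p$. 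So I would first apply (iii) repeatedly to remove every factor of $p$; each application refines the filtration by graded pieces of the form $D(\cdot,\cdot,1)$ (the scalar stays equal to $1$), eventually producing a filtration of $D(\delta,n,1)$ whose factors are the modules $D(\ell t',\cdot,1)$ with $t'$ the prime-to-$p$ part of $t$. Since $t'$ is coprime to $p$, part (ii) then splits each such factor as $\bigoplus_i D(r,a_i)$, and the Jordan--Holder theorem assembles these into the desired list of composition factors.

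The main obstacle is exactly this bookkeeping in the case $\sigma = \id$: because Proposition~\ref{prop:decomp}(iii) is stated only for $a=1$, one cannot peel a $p$-factor off a module whose scalar has already been rendered nontrivial by an application of (ii). Removing all $p$-factors \emph{before} any prime-to-$p$ factor — which is legitimate precisely because the reduction starts from $a=1$ — is what makes the induction close; everything else is formal, resting on Jordan--Holder and on the fact that a simple quotient of a module is one of its composition factors.
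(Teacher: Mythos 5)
Your proof is correct and takes essentially the same route as the paper: Theorem~\ref{theo:vp} yields a surjection $D(\delta,n) \tosurj D$, the case $\sigma \neq \id$ is handled by Proposition~\ref{prop:decomp}(i) plus simplicity of $D(r)$, and in the case $\sigma = \id$ the paper likewise applies part (iii) repeatedly first (while the scalar is still $1$) and only afterwards part (ii) on the prime-to-$p$ part. Your explicit construction of the morphism $e_i \mapsto \phi^i(z)$ and your justification of the (iii)-before-(ii) ordering merely spell out details the paper leaves implicit.
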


\begin{proof}
D'après le théorème \ref{theo:vp}, il existe des entiers $\delta > 0$,
$n \geq 0$ et un morphisme (dans $\Mp$) non nul $f : D(\delta, n) \to 
D$. La simplicité de $D$ assure que $f$ est surjectif, et donc que $D$
se retrouve parmi les constituants de Jordan-Hölder de $D(\delta,n)$.
Écrivons la fraction $r = \frac n {b^\delta -1}$ sous la forme $\frac m
{b^{\ell(r)} - 1}$. Le quotient $\frac {\delta} {\ell(r)}$ est alors un
nombre entier.

Si $\sigma$ n'est pas l'identité, la proposition \ref{prop:decomp}.(i)
montre que $D(\delta,n)$ s'écrit comme une somme directe de copies de
$D(r)$. En particulier, puisque les $D(r)$ sont simples d'après le
théorème \ref{theo:Dr}, tous les quotients de Jordan-Hölder de
$D(\delta,n)$ sont isomorphes à $D(r)$, et le théorème est démontré dans
ce cas.
Supposons maintenant qu'au contraire $\sigma = \id$. On écrit $\frac
{\delta} {\ell(r)} = p^v t$ où $t$ est un entier premier à $p$.
Plusieurs applications successibles de la proposition
\ref{prop:decomp}.(iii) montrent que $D(\delta,n)$ admet une suite de
composition dont les quotients successifs sont tous isomorphes à
$D(\delta p^{-v}, n')$ pour un certain entier $n'$. L'alinéa (ii) de la
même proposition montre alors que les constituants de Jordan-Hölder sont
dans ce cas tous isomorphes à des $D(r,a)$ pour certains éléments $a$ de
$k^\star$ (qui peuvent varier d'un composant à l'autre). La conclusion
en découle.
\end{proof}

\end{document}